\begin{document}
	\title{New Version of Mirror Prox for Variational Inequalities with Adaptation to Inexactness \thanks{The research of F. Stonyakin in Sections 2, 3 and numerical experiments in Table 1 were supported by Russian Science Foundation (project 18-71-00048). The research of E. Vorontsova was supported by Russian Foundation for Basic Research (project number 18-29-03071 mk and project number 18-31-20005 mol-a-ved).}}
	\titlerunning{Mirror Prox for VI with Adaptation to Inexactness}
	%
	\author{Fedor S. Stonyakin \inst{1,2} \orcidID{0000-0002-9250-4438}  \and 
	Evgeniya A. Vorontsova\inst{3, 4}\orcidID{0000-0003-2173-6503} \and
		Mohammad S. Alkousa	\inst{2} \orcidID{0000-0001-5470-0182} }
	\authorrunning{F. Stonyakin et al.}
	%
	\institute{V.\,I.\,Vernadsky Crimean Federal University, Simferopol, Russia\\
		\email{fedyor@mail.ru}
	\and
	Moscow Institute of Physics and Technology, Moscow, Russia\\
		\email{mohammad.alkousa@phystech.edu}
\and
Grenoble Alpes University, Grenoble, France
\and
Far Eastern Federal University, Vladivostok, Russia \\
\email{vorontsovaea@gmail.com}
		}
	\maketitle
	\begin{abstract}
		Some adaptive analogue of the Mirror Prox method for variational inequalities is proposed. In this work we consider the adaptation not only to
		the value of the Lipschitz constant,
		but also to the magnitude of the oracle error. This approach, in particular, allows us to prove a complexity near $O\left(\frac{1}{\varepsilon}\log_2\frac{1}{\varepsilon}\right)$ for variational inequalities for a special class of 
		monotone bounded operators. This estimate is optimal for variational inequalities with monotone Lipschitz-continuous 
		operators. However, there exists some error, which may be insignificant. The results of experiments on the comparison of the proposed approach with some known analogues are presented. Also, we discuss the results of the experiments for matrix games in the case of using non-Euclidean proximal setup.
		\keywords{Variational Inequality, Mirror Prox, Inexactness, Adaptation, Non-Smooth Operator, Lipschitz-continuous Operator, Matrix Game}
	\end{abstract}

\section{Introduction}

Variational inequalities (VI) and saddle point problems often arise in a variety of important applications
\cite{FaccPang_2003}. For solving such problems a lot of algorithmic schemes are known (see e.g. \cite{FaccPang_2003,Antipin_2017,Chambolle,Nemirovski_2004}). 
The Mirror Prox method proposed by A.\,S.\,Nemirovski \cite{Nemirovski_2004}
is currently one of the most popular of such methods. This method goes back to the well-known extragradient method proposed by G.\,M.\,Korpelevich in \cite{Korpelevich}. At the same time, unlike the standard extragradient method, Mirror Prox allows to effectively solve problems with non-Euclidean norms, as well as with the Holder-continuous operators:
\begin{equation}\label{eq001}
\|g(x)-g(y)\|_*\leqslant L_{\nu}\|x-y\|^{\nu}\; \, \forall \, x, \, y \, \in \, Q\text{ for some }\nu\in[0;1],
\end{equation}
all notations are explained in Sect.~\ref{section:Problem_Statement} below.

Recently, a universal analogue of the A.\,S.\,Nemirovski  method~\cite{Nemirovski_2004} was proposed in \cite{UMP_MAIN_1,UMP_MAIN}. The universality is understood as an adaptive adjustment to the optimal smoothness level~$\nu$
in \eqref{eq001}, as well as the constant value $L_\nu >0$. Note that universal gradient method for unconstrained convex optimization problems was proposed by Yu.E.~Nesterov~\cite{Nesterov_2015} (see also Sect.~5 of the textbook~\cite{Gasn_2017}). And it is possible to observe the convergence rate of the proposed method, which is typical for the smooth case $\nu=1$ (Lipschitz-continuous operators), for some problems with bounded operators ($L_0 < + \infty$ and $L_\nu = + \infty$ for all $\nu > 0$).

This paper is devoted to the modification of Mirror Prox method \cite{UMP_MAIN_1,UMP_MAIN} for the following analogue of the Lipschitz condition for the operator $g$ with constant $L>0$ 
\begin{equation}\label{eqv_gen_smooth}
\langle g(y)-g(x), \, y-z\rangle\leqslant LV(y,x) + LV(y,z)+\delta\|y-z\|\;\;\;\forall x,y,z\in Q,
\end{equation}
where $\delta > 0$ is a fixed
value and $V(y,x)$ is the Bregman divergence (see Sect.~\ref{section:Problem_Statement} below).

We propose an analogue of аdaptive Mirror Prox method \cite{UMP_MAIN_1,UMP_MAIN}. At the same time, we consider adaptive tuning both for the value of the parameters $L$ and $\delta$. 
One of the features of the proposed method which are important for applications
is the possibility for 
the value of $\delta$ 
to reflect the inexactness of operator $g$. In addition, the value of $\delta$ can indicate the degree of discontinuity of the operator $g$. Adaptive tuning to its value can approximate the convergence rate for variational inequalities with bounded operators ($\nu = 0$) to the convergence rate for variational inequalities with Lipshitz-continuous operators ($\nu = 1$). Effects of this approach can be observed for the universal method but without a theoretical justification for the convergence rate $O\left(\frac{1}{\varepsilon}\right)$ for non-smooth operators \cite{UMP_MAIN_1}. This means that the proposed approach in this paper is an alternative to the universal method.

The contribution of the paper can be summarized as follows:

- An analogue of the Mirror Prox method for variational inequalities with a monotone Lipschitz-continuous operator, which allows for adaptive tuning to the value of the Lipschitz constant $L$, as well as the limit value of the error $ \delta $ of the specifying operator $g$, is proposed.

- The applicability of the proposed method to a certain class of variational inequalities with bounded operators ($\nu = 0$) is discussed. The rate of convergence $O\left(\frac{1}{\varepsilon}\right)$ of this method is proved with some finite error associated with the non-smoothness of the operator. Thus, some alternative to the universal method has been proposed, but with a clearer theoretical rationale for acceleration.


- The results of numerical experiments for finding the equilibrium in a bilinear matrix game (or VI with Lipschitz-continuous operator) with a bounded error in the definition of the operator are given. A comparison of the quality of the calculated solution is given depending on the number of iterations for the adaptive Mirror Prox method from~\cite{UMP_MAIN}
and the method proposed in this paper with an adaptive setting for the magnitude of the error.

- The results of numerical experiments for a variational inequality with a bounded ($\nu = 0$) operator (related to the Fermat-Torricelli-Steiner problem) are 
presented. These results show that the method due to the proposed adaptation of the non-smoothness error can converge much faster than the optimal lower estimate $O\left(\frac{1}{\varepsilon^2}\right)$ for the corresponding class of problems.

\section{Problem Statement and Some Examples}\label{section:Problem_Statement}
Let $(E, \, \|\cdot\|)$ be a normed finite-dimensional vector space and $E^*$ be the conjugate space of $E$ with the norm:
$$||y||_*=\max\limits_x\{\langle y, \, x\rangle, \, ||x||\leq1\},$$
where $\langle y, \, x\rangle$ is the value of the continuous linear functional $y$ at $x \in E$.

Let $Q\subset E$ be a (simple) closed convex set, $d : Q \rightarrow \mathbb{R}$ be a distance generating function (d.g.f.), which is continuously differentiable and $1$-strongly convex with respect to the norm $\|\cdot\|$
and assume that $\min\limits_{x\in Q} d(x) = d(0).$

For all $x, \, y\in Q \subset E$ we consider the corresponding Bregman divergence
\begin{equation}\label{Bregman_diver}
    V(x , y) = d(x) - d(y) - \langle \nabla d(y), \, x - y \rangle.
\end{equation}

Let $g: Q \to E^*$ be a continuous operator. We  consider the problem of finding a solution to a variational inequality of the form
\begin{equation}\label{ad_meth_eq1}
\langle g(x_*), \, x_*-x \rangle\leqslant0\;\;\forall \, x \in Q.
\end{equation}
Under the assumption of the monotony of the operator $g$, i.e. 
$$
    \langle g(x)-g(y), \, x-y \rangle \geq 0 \;\; \forall \, x, \, y \in Q,
$$
the inequality \eqref{ad_meth_eq1} is equivalent to the following weak variational inequality
\begin{equation}\label{ad_meth_eq2}
\langle g(x), \, x_*-x \rangle     \leqslant0\;\;\forall \, x\in Q.
\end{equation}
Assume that the operator $g$ satisfies the condition \eqref{eqv_gen_smooth}. In this section, we show some examples of problems for which a condition of the form \eqref{eqv_gen_smooth} naturally arises. First of all, this is due to the inexactness of the oracle for the operator of a variational inequality. But also the value of $\delta\|y-z\|$ can describe the degree of discontinuity of the operator $g$ (i.e. using considered approach, one can propose an approach to the solution of some VI's with bounded operators).

\begin{example}
Let $g:Q\rightarrow\mathbb{R}^n$ be a Lipschitz-continuous operator with constant $L>0$, i.e.
$$\|g(x)-g(y)\|_*\leqslant L\|x-y\|\;\;\forall \, x, \, y\in Q.$$
However, suppose that the exact value of the operator $g$ is not available, and only an approximate value of $g(x)$, i.e. $\tilde{g}(x)$, is known:
$$
    \|\tilde{g}(x)-g(x)\|_*\leqslant\frac{\delta}{2} \quad \forall \, x\in Q.
$$

Then for each $x, \, y, \, z\in Q$ we have:
$$|\langle\tilde{g}(y)-\tilde{g}(x), \, y-z\rangle-\langle g(y)-g(x), \, y-z\rangle|=|\langle\tilde{g}(y)-g(y), \, y-z\rangle+\langle g(x)-\tilde{g}(x), \, y-z\rangle|\leqslant$$
$$\leqslant\|\tilde{g}(y)-g(y)\|_*\cdot\|y-z\|+\|\tilde{g}(x)-g(x)\|_*  \cdot\|y-z\|\leqslant  \left( \frac{\delta}{2}+\frac{\delta}{2} \right) \|y-z\|=\delta\|y-z\|.
$$
Therefore,
$$\langle\tilde{g}(y)-\tilde{g}(x), \, y-z\rangle\leqslant\langle g(y)-g(x), \, y-z\rangle+\delta\|y-z\|\leqslant\|g(y)-g(x)\|_*\cdot\|y-z\|+\delta\|y-z\| \leqslant$$
$$\leqslant L \|y-x\| \cdot \|y-z\| + \delta \|y-z\| \leqslant \frac{L}{2} \|y-x\|^2 + \frac{L}{2}\|y-z\|^2 + \delta \|y-z\| \leqslant $$
$$\leqslant LV(y,x)+LV(y,z)+\delta.$$
\end{example}

\begin{example}
Note that the term $\delta\|y-z\|$  in \eqref{eqv_gen_smooth} can describe   non-smoothness for the operator $g$ along any fixed vector segment $\{ty+(1-t)x\}_{0\leq t \leq 1}$. 
In general (if you combine all possible vector segments), on the domain of non-smoothness points there can be an infinite number.

For example, assume that for some subset $Q_0 \subset Q$ the function $f$ is differentiable at all points of $Q\setminus Q_0$ and that for an arbitrary $x\in Q_0$ there exists a finite subdifferential $\partial f(x)$ in the sense of convex analysis.

For fixed $x, y \in Q $ with $t\in[0;1]$ we denote $y_t:=(1-t)x+ty$. 
	
\begin{definition} (\cite{St_new})
Fix $\delta>0$ and $L>0$. We say that the convex function $f:Q\rightarrow\mathbb{R}\;(Q\subset\mathbb{R}^n)$ has $(\delta,L)$-Lipschitz subgradient ($f\in C_{L,\delta}^{1,1}(Q)$), if:
		\begin{itemize}
			\item[(i)] for arbitrary $x,y\in Q$ the function $f$ is differentiable at all points of the set $\{y_t\}_{0\leqslant t\leqslant1}$, with the exception of the sequence (possibly finite)
			\begin{equation}\label{ad_meth_eq10}
			\{y_{t_k}\}_{k=1}^{\infty}:\;t_1 < t_2<t_3 < \ldots \, \text{ and} \, \lim_{k\rightarrow\infty}t_k = 1;
			\end{equation}
			\item[(ii)]for a sequence of points from \eqref{ad_meth_eq10} there exist finite subdifferentials $\{\partial f(y_{t_k})\}_{k=1}^{\infty}$ and
			\begin{equation}\label{ad_meth_eq11}
			diam\;\partial f(y_{t_k})=:\delta_k>0,\text{ where }\sum_{k=1}^{+\infty}\delta_k=:\delta<+\infty,
			\end{equation}
			and \, $diam\;\partial f(x)=\max\{\|y-z\|_*\,\mid\,y,z\in\partial f(x)\};$
			\item[(iii)] if for $x,y\in Q$ the function $f$ is differentiable at each point $y_t$, $t \in (0; 1)$, then the following inequality holds:
			\begin{equation}\label{ad_meth_eq12}
			\min_{\substack{\hat{\partial}f(x)\in\partial f(x),\\\hat{\partial}f(y)\in\partial f(y)}}\|\hat{\partial}f(x)-\hat{\partial}f(y)\|_*\leqslant L\|x-y\|.
			\end{equation}
			\end{itemize}
	\end{definition}
	
Indeed, the property of $(\delta,L)$-Lipschitzness for each subgradient $g(x) = \hat{\partial}f(x)$ means that
\begin{equation}\label{ad_meth_eq4}
\|g(y)-g(x)\|_*\leqslant L\|y-x\|+\delta.
\end{equation}
	
To prove \eqref{ad_meth_eq4}, it suffices to split the segment $\{y_t\}_{0 \leqslant t \leqslant1}$ into the intervals of smoothness and take into account the boundedness of diameters of the subdifferentials at non-smoothness points of $f$.
	
The inequality \eqref{ad_meth_eq4} means that
$$\langle g(y) - g(x), \, y - z \rangle  \leqslant\|g(y)-g(x)\|_*\cdot\|y-z\|\leqslant L\|y-x\|\cdot\|y-z\|+\delta\|y-z\|\leqslant$$
$$\leqslant\frac{L}{2} (\|y-x\|^2+\|y-z\|^2) + \delta \|y-z\| \leqslant L V(y,x) + L V(z,y)+\delta\|y-z\|.$$
\end{example}

Let us give a concrete example of a non-smooth functional with a ($\delta$, L)-Lipschitz subgradient with an arbitrarily large Lipschitz constant.
\begin{example}
We fix some $k>0$, the value $\delta>0$ and consider the piecewise linear function $f:[0;1]\rightarrow\mathbb{R}$ (here $Q=[0;1]\subset\mathbb{R}$) defined as follows
\begin{equation}\label{ad_meth_eq7}
    f(x) = 
	\begin{cases}
	kx & ;0\leqslant x\leqslant\frac{1}{2},\\
	\left(k+\sum_{i=1}^{n}\frac{\delta}{2^i} \right) x - \sum_{i=1}^n\frac{\delta}{2^i} \left(1-\frac{1}{2^i}\right) & ;1-\frac{1}{2^n}<x\leqslant1-\frac{1}{2^{n+1}}, \\
	\lim_{x\rightarrow+1}f(x) & ;x = 1.
	\end{cases}
	\end{equation}
	
	In this case,  $Q_0 = \{1-\frac{1}{2^n}\}_{n=1}^{\infty}$,  $\partial f(q_n)=\left[k+\sum\limits_{i=1}^{n-1}\frac{\delta}{2^i};k+\sum\limits_{i=1}^n\frac{\delta}{2^i}\right]$ with  $n>1$, $\partial f(q_1)=\left[k;k+\frac{\delta}{2}\right]$ (here $q_n=1-\frac{1}{2^n}$ with $n=1,2,3,\ldots$). It is clear that $\partial f(q_n)=\frac{\delta}{2^n}$, which is true for the entered value $\delta>0$. Moreover, on the intervals $(0;q_1),\; (q_n;q_{n+1})$  the function $f$ has a Lipschitz-continuous gradient with the constant $L=0$. Therefore, for the function $f$ from \eqref{ad_meth_eq7}, we find that $f\in C_{0,\delta}^{1,1}(Q)$.
\end{example}

Any functional with a finite set of non-smooth points along an arbitrary segment will satisfy the proposed Lipschitz condition for the subgradient. Obviously, this condition holds for each objective function with finite points of non-smoothnes on each vector segment $[x;y]$. Thus, it is possible to apply this technique to problems of minimization for sum distances to several balls in Hilbert spaces \cite{Mordukh}. Such an objective function, obviously, will not be differentiable in the usual sense at the points of the boundaries of the balls of which there are infinitely many. Note that among points of each vector segment $[x;y]$ such an objective function have finite points of non-smoothness. However, the considered Lipschitz condition for a special choice of subgradient holds for some functions with infinitely many points of non-smoothness (e.g. for maximum of linear functions).

\section{Adaptive Method for Variational Inequalities with Adaptation to Inexactness}

In this section, we introduce a new version of the Mirror Prox method for variational inequalities (see \cite{UMP_MAIN}), which we call \textit{Mirror Prox with Adaptation to Inexactness (MPAI)}. In this version, which is listed as Algorithm \ref{algorithm:new_GMP} below, we consider the adaptation not only to the level of operator smoothness, but also to the magnitude of the oracle error, which may allow to receive complexity near $O\left(\frac{1}{\varepsilon}\right)$ for VI with bounded operators, i.e. the optimal complexity for VI with Lipshitz-continuous operators. 

We evaluate the solution quality of the problem \eqref{ad_meth_eq1}, produced by Algorithm~\ref{algorithm:new_GMP}, by using the Bregman divergence
\eqref{Bregman_diver}.

\begin{algorithm}
	\caption{Mirror Prox with Adaptation to Inexactness (MPAI).}
	\label{algorithm:new_GMP}
	{\bf Input:} $x^0=\arg\min\limits_{x\in Q}d(x),L^0,\delta^0$.
	\begin{algorithmic}[1]
		\STATE $N:=N+1;\;L^{N+1}:=\frac{L^N}{2};\;\delta^{N+1}:=\frac{\delta^N}{2}$.\\
		\STATE Calculate
		\begin{equation}\label{Stonextragrad1}
         y^{N+1}:=\arg\min\limits_{x\in Q} \{ \langle g(x^N), \, x-x^N \rangle + L^{N+1}V(x,x^N)\},
		\end{equation}
		\begin{equation}\label{Stonextragrad2}
		x^{N+1}:=\arg\min\limits_{x\in Q}  \{ \langle g(y^{N+1}), \, x-x^N \rangle  +L^{N+1}V(x,x^N) \}.
		\end{equation}
		\STATE {\bf If} 
		\begin{equation}\label{ineq:go_out_from_iteration}
		\langle g(y^{N+1})-g(x^N), \, y^{N+1}-x^{N+1} \rangle \leq  L^{N+1}V(y^{N+1},x^N) + 
		\end{equation}
		$$
		+ L^{N+1}V(x^{N+1},y^{N+1})+ \delta^{N+1} \left \|y^{N+1}-x^{N+1} \right \|,
		$$
		{\bf then} go to the next iteration (item 1).\\
		\STATE {\bf Else} increase $L^{N+1}$ and $\delta^{N+1}$ by two times and go to item 2.\\
\end{algorithmic}
\end{algorithm}

\begin{theorem}\label{ad_meth_th1}
	After $N$ iterations of Algorithm~\ref{algorithm:new_GMP}, the following estimate holds:
	$$\sum_{k=0}^{N-1}\frac{1}{L^{k+1}}\langle g(y^{k+1}), \, y^{k+1}-x \rangle \leqslant V(x,x^0)-V(x,x^N)+\sum_{k=0}^{N-1}\frac{\delta^{k+1}}{L^{k+1}} \left \|y^{k+1}-x^{k+1} \right \|.$$
\end{theorem}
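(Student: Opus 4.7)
The plan is to follow the standard Mirror Prox template, with the exit condition \eqref{ineq:go_out_from_iteration} playing exactly the role that a Lipschitz-type bound plays in Nemirovski's original analysis. The main tool will be the first-order optimality lemma for Bregman-regularized subproblems: if $u = \arg\min_{x\in Q}\{\langle \xi,x\rangle + V(x,w)\}$, then for every $x\in Q$ one has $\langle \xi, u-x\rangle \leq V(x,w) - V(x,u) - V(u,w)$. Applying it to \eqref{Stonextragrad1} with $\xi = g(x^k)/L^{k+1}$ and $w=x^k$, and then to \eqref{Stonextragrad2} with $\xi = g(y^{k+1})/L^{k+1}$ and $w=x^k$, yields two inequalities: one controlling $\langle g(x^k), y^{k+1}-x\rangle/L^{k+1}$ for the prediction step and one controlling $\langle g(y^{k+1}), x^{k+1}-x\rangle/L^{k+1}$ for the correction step. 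Only the second of these will be applied with an arbitrary test point $x\in Q$; the first will be used at the fixed test point $x=x^{k+1}$.

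The next step is the classical decomposition
\[
\frac{1}{L^{k+1}}\langle g(y^{k+1}),\,y^{k+1}-x\rangle = \frac{1}{L^{k+1}}\langle g(y^{k+1}),\,x^{k+1}-x\rangle + \frac{1}{L^{k+1}}\langle g(y^{k+1})-g(x^k),\,y^{k+1}-x^{k+1}\rangle + \frac{1}{L^{k+1}}\langle g(x^k),\,y^{k+1}-x^{k+1}\rangle.
\]
I would bound the first summand by the optimality inequality for $x^{k+1}$, the third summand by the optimality inequality for $y^{k+1}$ applied at $x=x^{k+1}$, and the middle summand directly by the accepted inequality \eqref{ineq:go_out_from_iteration} divided by $L^{k+1}$. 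Adding the three bounds, the terms $\pm V(x^{k+1},x^k)$, $\pm V(y^{k+1},x^k)$ and $\pm V(x^{k+1},y^{k+1})$ cancel in pairs, leaving the clean per-iteration estimate
\[
\frac{1}{L^{k+1}}\langle g(y^{k+1}),\,y^{k+1}-x\rangle \leq V(x,x^k) - V(x,x^{k+1}) + \frac{\delta^{k+1}}{L^{k+1}}\|y^{k+1}-x^{k+1}\|.
\]

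Finally, I would sum these bounds for $k=0,1,\dots,N-1$. The first two terms on the right telescope to $V(x,x^0) - V(x,x^N)$, while the error terms accumulate additively, producing the exact inequality in the statement. The only mild subtlety is making sure that the termination condition \eqref{ineq:go_out_from_iteration} is indeed satisfied at every accepted iteration; this is guaranteed by the adaptive doubling rule in step 4 of Algorithm~\ref{algorithm:new_GMP}, so no extra argument is needed at this stage. The hardest (and really the only non-mechanical) part is getting the three Bregman cross-terms to cancel correctly, but once the decomposition above is set up, it is essentially book-keeping.
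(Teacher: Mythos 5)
Your proposal is correct and follows essentially the same route as the paper: the same three-term decomposition of $\langle g(y^{k+1}),y^{k+1}-x\rangle$, the same use of the Bregman three-point/optimality lemma (applied with arbitrary $x$ for the correction step and at $x=x^{k+1}$ for the prediction step), the exit condition \eqref{ineq:go_out_from_iteration} for the cross term, and the same telescoping cancellation. No substantive differences to report.
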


\begin{proof}
One can directly check the following inequalities:
\begin{equation}\label{7}
\left\langle \nabla_x V(x,x^k)\big|_{x=x^{k+1}}, \, x-x^{k+1}\right\rangle= V(x,x^k)-V(x,x^{k+1})-V(x^{k+1},x^k),
\end{equation}
\begin{equation}\label{8}
\left\langle \nabla_x V(x,x^k)\big|_{x=y^{k+1}}, \, x-y^{k+1}\right\rangle= V(x,x^k)-V(x,y^{k+1})-V(y^{k+1},x^k).
\end{equation}
Further, for each $x\in Q$ and $k=\overline{0,\ N-1}$:
$$\left\langle \nabla_x \left(\left\langle g(x^k), \, x-x^k \right\rangle+L^{k+1}V(x,x^k)\right)\big|_{x=y^{k+1}},\ x-y^{k+1} \right\rangle \geqslant 0,$$
$$\left\langle \nabla_x \left(\left\langle g(y^{k+1}), \, x-x^k \right\rangle+L^{k+1}V(x,x^k)\right)\big|_{x=x^{k+1}},\ x-x^{k+1} \right\rangle \geqslant 0.$$

Thus,
$$\left\langle g(y^{k+1}), \, x^{k+1}-x \right\rangle \leqslant L^{k+1}V(x,x^k)- L^{k+1}V(x,x^{k+1})-L^{k+1}V(x^{k+1},x^k)$$
and
$$\left\langle g(x^{k}), \, y^{k+1}-x \right\rangle \leqslant L^{k+1}V(x,x^{k})-L^{k+1}V(x,y^{k+1})-L^{k+1}V(y^{k+1},x^k).$$

Taking into account \eqref{ineq:go_out_from_iteration}, we have for each $k=\overline{0,\ N-1}$:
$$\left\langle g(y^{k+1}), \, y^{k+1}-x \right\rangle = \left\langle g(y^{k+1}), \, x^{k+1}-x \right\rangle+ \left\langle g(x^{k}), \, y^{k+1}-x^{k+1}\right\rangle +$$
$$+ \left\langle g(y^{k+1})-g(x^k), \, y^{k+1}-x^{k+1}\right\rangle \leqslant$$
$$\leqslant L^{k+1}V(x,x^{k})-L^{k+1}V(x,x^{k+1})-L^{k+1}V(x^{k+1},x^{k})+L^{k+1}V(x^{k+1},x^{k})-$$
$$-L^{k+1}V(x^{k+1},y^{k+1})-L^{k+1}V(y^{k+1},x^{k})+L^{k+1}V(y^{k+1},x^{k})+$$
$$
+L^{k+1}V(x^{k+1},y^{k+1}) +
\delta^{k+1} \left \|y^{k+1}-x^{k+1} \right \|,
$$
i.e.
\begin{equation}\label{9}
\frac{1}{L^{k+1}} \left\langle g(y^{k+1}), \, y^{k+1}-x \right\rangle \leqslant V(x,x^k)-V(x,x^{k+1}) + 
\frac{\delta^{k+1}}{L^{k+1}}
\left \|y^{k+1}-x^{k+1} \right \|.
\end{equation}

After summing \eqref{9} by $k=\overline{0,\ N-1}$, we have
$$\sum_{k=0}^{N-1} \frac{1}{L^{k+1}} \langle g(y^{k+1}), \, y^{k+1}-x\rangle \leqslant V(x,x^0)-V(x,x^N) +\sum_{k=0}^{N-1}\frac{\delta^{k+1}}{L^{k+1}}
\left \|y^{k+1}-x^{k+1} \right \|.$$
$\blacksquare$
\end{proof}

Let us denote 
$$S_N=\sum\limits_{k = 0}^{N-1}\frac{1}{L^{k+1}}, \,
\widetilde{y} = \frac{1}{S_N}\sum\limits_{k = 0}^{N-1} \frac{y^{k+1}}{L^{k+1}}  \text{  and   } R^2=\max_{x\in Q}V(x,x^0).$$

\begin{theorem}\label{ad_meth_th2}
For monotone operator $g$	after $N$ iterations of Algorithm \ref{algorithm:new_GMP},
	the following estimate holds: 
	\begin{equation}\label{ineq:estimate_of_GMP}
	\max_{x\in Q}\langle g(x), \, \widetilde{y} - x \rangle \leqslant \frac{R^2}{S_N}+\frac{1}{S_N}\sum_{k=0}^{N-1}\frac{\delta^{k+1}}{L^{k+1}}
	\left \|y^{k+1}-x^{k+1} \right \|.
	\end{equation}
Assume that for fixed $\varepsilon$ 
\begin{equation}\label{stopping_rule}
\sum\limits_{k=0}^{N-1}\frac{1}{L^{k+1}}\geqslant\frac{R^2}{\varepsilon}.
\end{equation}
Then the following inequality holds:
	\begin{equation}\label{ineq:estimateofGMP}
	\max_{x\in Q} \langle g(x), \, \widetilde{y} - x \rangle \leqslant \varepsilon + \frac{1}{S_N}\sum_{k=0}^{N-1}\frac{\delta^{k+1}}{L^{k+1}} \left \|y^{k+1}-x^{k+1} \right \|.
\end{equation}
If $L^0\leqslant2L$, then  inequality \eqref{stopping_rule} holds at no more than $$N=\left\lceil\frac{2LR^2}{\varepsilon}\right\rceil$$ 
iterations of Algorithm \ref{algorithm:new_GMP}. 
\end{theorem}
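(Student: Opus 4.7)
The plan is to derive \eqref{ineq:estimate_of_GMP} as a fairly direct consequence of Theorem~\ref{ad_meth_th1} combined with monotonicity of $g$, and then handle the two corollary statements essentially by bookkeeping. First I would start from the estimate already established in Theorem~\ref{ad_meth_th1} and apply monotonicity: since $\langle g(y^{k+1}) - g(x), y^{k+1} - x\rangle \geq 0$, the left-hand side sum is bounded below by $\sum_{k=0}^{N-1}\frac{1}{L^{k+1}}\langle g(x), y^{k+1} - x\rangle$. Pulling $g(x)$ out of the sum and using linearity gives $S_N \langle g(x), \widetilde{y} - x\rangle$ on the left. Dropping the nonpositive term $-V(x,x^N)$ on the right and using $V(x,x^0)\leq R^2$, then dividing by $S_N$ and maximizing over $x\in Q$, yields \eqref{ineq:estimate_of_GMP}.

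The second claim is immediate: under the assumption \eqref{stopping_rule}, $R^2/S_N \leq \varepsilon$, so substituting into \eqref{ineq:estimate_of_GMP} gives \eqref{ineq:estimateofGMP}.

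For the iteration-complexity bound, the key observation I would use is the adaptive doubling mechanism together with condition \eqref{eqv_gen_smooth}: whenever $L^{k+1} \geq L$, the exit test \eqref{ineq:go_out_from_iteration} is satisfied by the assumption on $g$ (with $\delta^{k+1}$ chosen accordingly), so we never need to double past $L$. Since each new iteration starts from $L^{N+1} := L^N/2$ and doublings only occur while $L^{k+1} < L$, the final accepted value satisfies $L^{k+1} \leq 2L$ for all $k$, provided $L^0 \leq 2L$. Consequently $\frac{1}{L^{k+1}} \geq \frac{1}{2L}$ at every iteration, so $S_N \geq \frac{N}{2L}$. Taking $N = \lceil 2LR^2/\varepsilon\rceil$ ensures $S_N \geq R^2/\varepsilon$, i.e.\ the stopping rule \eqref{stopping_rule}.

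The main technical subtlety, which is really the only nontrivial point, is justifying that the backtracking loop in step~4 of Algorithm~\ref{algorithm:new_GMP} terminates and that the terminal $L^{k+1}$ stays within a factor two of $L$; this rests on the fact that assumption \eqref{eqv_gen_smooth} with parameters $(L,\delta)$ forces the inexact descent condition \eqref{ineq:go_out_from_iteration} to hold as soon as the current trial values $(L^{k+1},\delta^{k+1})$ dominate $(L,\delta)$. Everything else is algebra and a convex-combination argument.
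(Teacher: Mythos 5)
Your proof of the first two claims is exactly the paper's argument: monotonicity gives $\langle g(x),y^{k+1}-x\rangle\leqslant\langle g(y^{k+1}),y^{k+1}-x\rangle$, which combined with Theorem~\ref{ad_meth_th1}, $V(x,x^0)\leqslant R^2$, dropping $-V(x,x^N)$ and dividing by $S_N$ yields \eqref{ineq:estimate_of_GMP} and then \eqref{ineq:estimateofGMP}. For the iteration bound the paper actually gives no argument at all, and your doubling argument is the standard (and correct) way to supply it; the only caveat is that the inner loop terminates when \emph{both} $L^{k+1}\geqslant L$ and $\delta^{k+1}\geqslant\delta$ hold, so the conclusion $L^{k+1}\leqslant 2L$ strictly also requires $\delta^0/L^0\geqslant\delta/L$ (since both parameters are doubled in lockstep) --- a point you wave at with ``chosen accordingly'' and which the paper glosses over as well.
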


\begin{proof}
By monotony of $g$ we have for each $k = 0, 1, ...$:
	$$ \langle g(x), \, y^{k+1}-x \rangle = \langle g(y^{k+1}), \, y^{k+1}-x \rangle + \langle g(x)-g(y^{k+1}), \, y^{k+1}-x\rangle \leqslant \left \langle g(y^{k+1}), \, y^{k+1}-x \right\rangle,$$
	so the inequality
	\begin{equation}\label{ad_meth_eq8}
	\begin{split}
	\frac{1}{S_N} \max_{x\in Q}\sum\limits_{k=0}^{N-1}\frac{1}{L^{k+1}} \left\langle g(y^{k+1}), \, y^{k+1}-x \right\rangle\leqslant\\ 
	\leqslant \frac{R^2}{S_N}+\frac{1}{S_N}\sum_{k=0}^{N-1}\frac{\delta^{k+1}}{L^{k+1}} \left \|y^{k+1}-x^{k+1} \right \| \leqslant \varepsilon + \frac{1}{S_N}\sum_{k=0}^{N-1}\frac{\delta^{k+1}}{L^{k+1}}\left \|y^{k+1}-x^{k+1} \right \|
	\end{split}
	\end{equation}
	can be replaced by
	\begin{equation}\label{ad_meth_eq9}
	\begin{split}
	    \max_{x\in Q} \left \langle g(x), \, \widetilde{y}-x \right\rangle \leqslant \frac{R^2}{S_N}+\frac{1}{S_N}\sum_{k=0}^{N-1}\frac{\delta^{k+1}}{L^{k+1}}\left \|y^{k+1}-x^{k+1} \right \|\leqslant\\ \leqslant \varepsilon + \frac{1}{S_N}\sum_{k=0}^{N-1}\frac{\delta^{k+1}}{L^{k+1}}\left \|y^{k+1}-x^{k+1} \right \|.
	\end{split}
	\end{equation}
	
	\end{proof}
	
\begin{remark}
Due to adaptive choice of parameters $L^{k+1}$ and $\delta^{k+1}$ at each iteration of Algorithm \ref{algorithm:new_GMP} the expression 
$$
\frac{R^2}{S_N}+\frac{1}{S_N}\sum_{k=0}^{N-1}\frac{\delta^{k+1}}{L^{k+1}}\left \|y^{k+1}-x^{k+1} \right \|
$$
in \eqref{ad_meth_eq9} may be small enough even in the case of $L = + \infty$ or $\delta = + \infty$ in \eqref{eqv_gen_smooth}.
\end{remark}
	
\begin{remark}
Clearly, for each $k$, we have $\delta_k \leqslant C_L\delta$ ($C_L = \max\left\{1, \frac{2L}{L^0}\right\}$) and:
$$
    \frac{1}{S_N}\sum_{k=0}^{N-1} \frac{\delta^{k+1}}{L^{k+1}}\left \|y^{k+1}-x^{k+1} \right \| \leqslant C_L\delta \max_{k = \overline{0, N-1}} \left \|y^{k+1}-x^{k+1} \right \|. 
$$
 
This means that the value associated with the error in the specifying operator~$g$ is bounded on the set~$Q$ of a finite diameter.
\end{remark}

\begin{remark}
	If $g \not\equiv 0$, then the condition $L^0 \leqslant 2 L$ can be satisfied by choosing
	$$L^0 := \frac{\|g(x) - g(y)\|_{*}}{\|x - y\|} \; \text{at} \; g(x) \neq g(y).$$
\end{remark}

\begin{remark}
	Note that the estimate of the number of iterations $ N = \left \lceil \frac{2LR^2} {\varepsilon} \right \rceil $ with accuracy to a numerical factor is optimal for variational inequalities with a Lipschitz-continuous operator \cite{Nemirovsky_compl}.
	Note that the evaluation of the inexactness of the value of the operator, as we see from the previous remark, is bounded and does not accumulate.
	
	Note that similarly Remark 4 in \cite{St_Num_Anal} the total number of attempts to solve \eqref{Stonextragrad1} and \eqref{Stonextragrad2} is bounded by $4N + \max\left\{\log_2\frac{2L}{L^0},~ \log_2\frac{2\delta}{\delta^0}\right\}$.

\end{remark}

\begin{remark}\label{RemNonSmooth}  

It was pointed earlier that $\delta $ can describe the degree of non-smoothness of the operator $g$. Let us show how Algorithm \ref{algorithm:new_GMP} can be slightly modified in order to provably 
obtain an approximate complexity estimate
$$
    O\left(\frac{1}{\varepsilon^2}\log_{2}\frac{1}{\varepsilon}\right)
$$
to achieve the quality of the solution $\widetilde{y}$:
$$
    \displaystyle\max_{x\in Q}\langle g(x), \, \widetilde{y}-x \rangle\leqslant \varepsilon
$$
in the case of a bounded operator $g$ (generally speaking, non-smooth).

Suppose that at each iteration $L\leqslant L_{k+1}\leqslant 2L$ (this can always be achieved with a constant number of calculations in item 2 of the listing of Algorithm \ref{algorithm:new_GMP}).

We propose such a procedure for some positive integer $p$: repeat the operations item 2 of Algorithm \ref{algorithm:new_GMP}, $p$ times, each time
\begin{equation}\label{B1}
L_{k+1}:=2 \cdot L_{k+1} \;\; \text{with the same}\;\; \delta_{k+1}\leqslant 2\delta
\end{equation}
(for each iteration of the algorithm \ref{algorithm:new_GMP}, $k=0, \, 1, \, 2, \, \ldots, \, N-1$). A procedure of type \eqref{B1} will be stopped if one of the following two inequalities holds:
\begin{equation}\label{B2}
\delta_{k+1} \left \|y^{k+1}-x^{k+1} \right \|\leqslant\frac{\varepsilon}{2},
\end{equation}
or
\begin{equation}\label{B3}
\left\langle g(y^{k+1})-g(x^{k}), \, y^{k+1}-x^{k+1}\right\rangle\leqslant2^{p-1}L\left( \left \|y^{k+1}-x^{k} \right \|^{2}+ \left \|y^{k+1}-x^{k+1} \right \|^{2}\right).
\end{equation}

Note that the  procedure \eqref{B1} involves updating $x^{k+1}$ and $y^{k+1}$ during repetitions. Let us estimate $p$
(the number of times required for executing \eqref{B2} or \eqref{B3} to repeat a procedure of type \eqref{B1}).

It is clear that  $\forall\,x^{k}, x^{k+1}, y^{k+1}\in Q$ it is true that
$$
\left\langle g(y^{k+1})-g(x^{k}), \, y^{k+1}-x^{k+1}\right\rangle\leqslant\frac{L}{2} \left \|y^{k+1}-x^{k} \right \|^{2}+\frac{L}{2} \left \|y^{k+1}-x^{k+1} \right \|^{2}+\delta \left \|y^{k+1}-x^{k+1} \right \|.
$$
Moreover, $\delta_{k+1}\leqslant 2\delta$. If \eqref{B2} is not true, then $\left \|y^{k+1}-x^{k+1} \right \|>\displaystyle\frac{\varepsilon}{4\delta}$ and the inequality \eqref{B3} is obviously satisfied at
\begin{equation}\label{B4}
2^{p}>1+\frac{16\delta^{2}}{L\varepsilon},
\end{equation}
since in that case
$$
\frac{2^{p}-1}{2}L \left \|y^{k+1}-x^{k+1} \right \|^{2}>\delta \left \|y^{k+1}-x^{k+1} \right \|.
$$

So, after repeating the $p$ procedures of type \eqref{B1} at each of the $N$ iterations of the method, the following inequality holds:
$$
    \max_{x\in Q}\langle g(x), \, \widetilde{y}-x \rangle\leqslant\frac{R^{2}}{S_{N}} +\frac{\varepsilon}{2}.
$$
Further,
$$
    S_{N}=\sum_{k=0}^{N-1}\frac{1}{L_{k+1}}\geqslant\frac{N}{2^{p+1}L}.
$$

Therefore, $\displaystyle\frac{R^{2}}{S_{N}}\leqslant\frac{2^{p+1}L R^{2}}{N}\leqslant \frac{\varepsilon}{2}$ at $N\geqslant\displaystyle\frac{2^{p+2}L R^{2}}{\varepsilon}$, whence, taking into account \eqref{B4}
\begin{equation}\label{B5}
N\geqslant\frac{4L R^{2}}{\varepsilon}+\frac{64\delta^{2}R^{2}}{\varepsilon^{2}}.
\end{equation}

Generally speaking, we need $O\left(\log\displaystyle\frac{1}{\varepsilon}\right)$ additional steps of item 2 of listing 1 of Algorithm \ref{algorithm:new_GMP} at each iteration. So, the final estimate of complexity to achieve the quality of $\displaystyle\max_{x\in Q} \langle g(x), \, \widetilde{y}-x \rangle\leqslant \varepsilon$  will be $O\left(\displaystyle\frac{1}{\varepsilon^{2}}\log_{2}\displaystyle\frac{1}{\varepsilon}\right)$. It is well-known that this estimate is optimal up to a logarithmic factor.

However, for small enough $\delta$ in \eqref{B5} we have complexity near $O\left(\displaystyle\frac{1}{\varepsilon}\log_{2}\displaystyle\frac{1}{\varepsilon}\right)$.
\end{remark}

\section{Numerical Experiments for Non-smooth Problem: Variational Inequality for Some Analogue of Fermat-Torricelli-Steiner Problem} \label{section:numerical1}

In this section, to show the advantages of the proposed Algorithm \ref{algorithm:new_GMP}, we consider some numerical experiments for the saddle point problem (and the corresponding VI), which corresponds to the convex programming problem for some analogues of the Fermat-Torricelli-Steiner problem with functional constraints. Note that the objective functions are non-smooth and the corresponding operators of the variational inequality of the problem under consideration are bounded ($\nu = 0$). However, experimentally, due to adaptation, we can observe an estimate of the complexity inherent in the case of Lipshitz-continuous operators of VI.

All experiments in this section were implemented in Python 3.4, on a computer equipped with Intel(R) Core(TM) i7-8550U CPU @ 1.80GHz, 1992 Mhz, 4 Core(s), 8 Logical Processor(s). RAM of the computer is 8GB.

 For a given set of $N$ points $\{A_k=(a_{1k},a_{2k},\ldots,a_{nk}); \, k=\overline{1,N}\}$, that represent the centers of the balls $\omega_k$ with radii $r_k$, in the $n$-dimensional Euclidean space  $\mathbb{R}^n$, we need to find such a point $X=(x_1,x_2,\ldots,x_n)$ of the objective function \cite{Mordukh}
\begin{equation}\label{object_circles}
f(x):=\sum\limits_{k=1}^{N}d(X,A_k),
\end{equation}
where
$$
d(X,A_k)=
\begin{cases}
XA_k-r_k,&\mbox{if } |XA_k|\geqslant r_k\\
0,&\mbox{otherwise},
\end{cases}
$$
would take the minimal value on the set $Q$, which is given by several functional constraints:

\begin{equation}\label{eq_2}
\begin{split}
\varphi_1(x)=\alpha_{11}x_1^2+\alpha_{12}x_2^2+\ldots+\alpha_{1n}x_n^2-1,\\
\varphi_2(x)=\alpha_{21}x_1^2+\alpha_{22}x_2^2+\ldots+\alpha_{2n}x_n^2-1,\\
\ldots\\
\varphi_m(x)=\alpha_{m1}x_1^2+\alpha_{m2}x_2^2+\ldots+\alpha_{mn}x_n^2-1,
\end{split}
\end{equation}	

where the coefficients $\alpha_{11},\alpha_{12},\ldots,\alpha_{mn}$ are represented by the matrix
$$
\begin{pmatrix}
\alpha_{11} & \alpha_{13} & \dots & \alpha_{1n} \\
\alpha_{21} & \alpha_{23} & \dots & \alpha_{2n} \\
\hdotsfor{4} \\
\alpha_{m1} & \alpha_{m3} & \dots & \alpha_{mn}
\end{pmatrix},
$$
in which one element of each row is an integer belonging to the interval $(1;10)$, and the remaining elements of the row are equal to 1.

To solve such a problem, we can consider a saddle point problem $\min\limits_{x}\max\limits_{\lambda} L(x, \lambda)$, where
 $$L(x,\lambda)=f(x)+\sum\limits_{p=1}^m\lambda_p\varphi_p(x),\;\overrightarrow{\lambda}=(\lambda_1,\lambda_2,\ldots,\lambda_m).$$
Consider the corresponding variational inequality:
$$
\langle G(x_*, \, \overrightarrow{\lambda}_*), \, (x_*,\overrightarrow{\lambda}_*)-(x,\overrightarrow{\lambda})\rangle\leqslant0\;\;\forall(x,\overrightarrow{\lambda})\in B\subset\mathbb{R}^{n+m},
$$
where
$$B=\left\{(x,\overrightarrow{\lambda})\,|\,\sum\limits_{k=1}^nx_k^2+\sum\limits_{p=1}^m\lambda_p^2\leqslant1\right\},$$
$$
G(x,\lambda)=
\begin{pmatrix}
\nabla f(x)+\sum\limits_{p=1}^m\lambda_p\nabla\varphi_p(x), \\
-\varphi_1(x),-\varphi_2(x),\ldots,-\varphi_m(x)
\end{pmatrix}.
$$
We give an example for $n=100$, $m=20$, $N=5$, initial approximation 
$$(x^0, \lambda^0) = \left(\frac{1}{\sqrt{m+n}},\frac{1}{\sqrt{m+n}}\ldots,\frac{1}{\sqrt{m+n}}\right) \in \mathbb{R}^{n+m},$$
and $\delta_0 = \frac{1}{20}$. The coordinates of the points $A_k$ are chosen in such a way that $\left \|A_k \right \| \in [1; \, 2]$. We choose the standard Euclidean proximal setup as a prox-function.

Note that the centers of the balls were chosen with the norm in the interval $[1;2]$, and the radii of the balls are 1. Therefore, in a single ball with the center at zero there will be points of the boundary of the balls in which the objective function \eqref{object_circles} and the operator of the corresponding variational inequality will be bounded. At the same time, it can be shown that the diameter of the subdifferential at such points will be at least 1. It means that theoretically $\delta$ can be at least 1. However, experimentally, we can see significantly better solution quality (see the column "General estimate" in Table \ref{table:2}).

The results of the work of Algorithm \ref{algorithm:new_GMP}, for objective function \eqref{object_circles} are represented in Table \ref{table:2} below.

\begin{table}[H]
	\centering
	\caption{The results of Algorithm \ref{algorithm:new_GMP} for  objective function \eqref{object_circles}. }
	\label{table:2}
	\begin{tabular}{|c|c|c|}
		\hline
		Iterations & General estimate &  Time (sec.) \\ \hline
        17  &  0.1051  &  0.264 \\ \hline
        19  &  0.0527  &  0.291 \\ \hline
        21  &  0.0266  &  0.315 \\ \hline
        22  &  0.0212  &  0.342 \\ \hline
        23  &  0.0177  &  0.354 \\ \hline
        24  &  0.0133  &  0.364 \\ \hline
        25  &  0.0106  &  0.380 \\ \hline
        26  &  0.0082  &  0.427 \\ \hline
        27  &  0.0063  &  0.467 \\ \hline
        28  &  0.0048  &  0.423 \\ \hline
        29  &  0.0044  &  0.443 \\ \hline
	\end{tabular}
\end{table}

It is known (\cite{Nemirovsky_compl,NemYud_1979}) that for variational inequalities with bounded operators, the theoretical estimate of the complexity (the convergence rate) $O\left(\frac{1}{\varepsilon^2}\right)$ is theoretically optimal. However, experimentally we see from Table \ref{table:2} that, for example, an accuracy of $0.1051$ is achieved in $17$ iterations, and a 10-fold greater accuracy of $0.0106$ is achieved in $25$ iterations and approximately in the same time. If the method worked without adaptation and strictly according to optimal lower bounds for the specified class of problems, then the increase in costs could be approximately $100$ times. Thus, due to the adaptability of the proposed method, we observe a convergence rate close to $O\left(\ln\left(\frac{1}{\varepsilon}\right)\right)$.

Now for a given set of $N$ points $\{A_k=(a_{1k},a_{2k},\ldots,a_{nk}); \, k=\overline{1,N}\}$ in $n$-dimensional Euclidean space $\mathbb{R}^n$ we need to find such a point $x=(x_1,x_2,\ldots,x_n)$, that the objective function
\begin{equation}\label{objective_points}
    f(x):=\sum\limits_{k=1}^N\sqrt{(x_1-a_{1k})^2+\ldots+(x_n-a_{nk})^2} = \sum\limits_{k=1}^N \left \|x - A_k \right \|_2
\end{equation}
would take the minimal value on the set $Q$, which is defined by the previous constraints \eqref{eq_2}. The coordinates of the points $A_k$ for $k=\overline{1,N}$, are chosen as the rows of the matrix $A \in \mathbb{R}^{N\times n}$. The entries of the matrix $A$ are random integers in the closed interval $[-10;10]$, which are drawn from  the discrete uniform distribution. 

The results of the work of Algorithm \ref{algorithm:new_GMP}, for objective function \eqref{objective_points} and for some different values of $n, m$ and $N$, are presented in Table \ref{results:Fermat_Torricelli_points_quadratic} below. These results demonstrate the number of iterations produced by Algorithm \ref{algorithm:new_GMP} to reach the solution of the problem, the quality of the solution "General estimate", which is the right side of inequality \eqref{ineq:estimate_of_GMP}, and the running time of the algorithm in seconds.
\begin{table}[H]
\centering
	\caption{The results of Algorithm \ref{algorithm:new_GMP} for  objective function \eqref{objective_points}. }
	\label{results:Fermat_Torricelli_points_quadratic}
\begin{tabular}{|c|c|c|c|c|c|}
\hline
  \multicolumn{3}{|c|}{ $n=600, m=400, N=25$} & \multicolumn{3}{c|}{ $n=1000, m=500, N=50$} \\ \hline 
Iteration &General estimate &Time (sec.) & Iteration &General estimate &Time (sec.)       \\ \hline
 22  &0.122   &67.955  &19    &0.1343  &252.151  \\ \hline
 23  &0.061   &70.587  &20    &0.0672  &252.673  \\ \hline
 24  &0.0305  &75.107  &21    &0.0336  &266.636  \\ \hline
 25  &0.0153  &72.917  &22    &0.0168  &279.883  \\ \hline
 26  &0.0076  &76.686  &23    &0.0084  &293.866  \\ \hline
\end{tabular}
\end{table}

As we see from Table~\ref{results:Fermat_Torricelli_points_quadratic} we also observe a convergence rate close to $O\left(\ln\left(\frac{1}{\varepsilon}\right)\right)$, due to the adaptability of the proposed method.

\begin{remark}\label{numerical_results_points_in_unit_ball}
Now we take all previous parameters but with points $A_k (k=\overline{1,N})$  in the unit ball. The results of Algorithm \ref{algorithm:new_GMP}, for objective function \eqref{objective_points} and for some different values of $n$, $m$ and  $N$, with constraints \eqref{eq_2}, are presented in Table \ref{results_points_in_unit_ball} below.
    
In this case, since the points $A_k$ are chosen in the unit ball, the operator of the variational inequality is bounded. The results of experiments in Table \ref{results_points_in_unit_ball} show that the rate of convergence of the proposed method is close to $O\left(\frac{1}{\varepsilon}\right)$, which is significantly better than the optimal one, which is $O\left(\frac{1}{\varepsilon^2}\right)$, for non-smooth convex optimization problems and bounded variational inequalities \cite{NemYud_1979}.

\begin{table}[H]
	\centering
	\caption{The results of Algorithm \ref{algorithm:new_GMP} for objective function \eqref{objective_points}, with points $A_k$ in the unit ball.}
	\label{results_points_in_unit_ball}
	\begin{tabular}{|c|c|c|c|c|c|}
		\hline
		\multicolumn{3}{|c|}{$n=100, m=50, N=25 $} & \multicolumn{3}{c|}{$n=200, m=100, N=50 $} \\ \hline
	Iteration & General estimate & Time (sec.)& Iteration &General estimate &Time (sec.) \\ \hline
	318	 &0.2539  &35.805  &684  &0.2522 &441.744 \\ \hline
	468	 &0.1702  &52.809  &1016 &0.1688 &645.082 \\ \hline
	618	 &0.1279  &71.484  &1350 &0.1267 &857.185 \\ \hline
	768	 &0.1026  &87.851  &1682 &0.1015 &1049.885 \\ \hline
	918	 &0.0857  &103.686 &2016 &0.0846 &1305.006 \\ \hline
	1068 &0.0736  &123.056 &2349 &0.0727 &1534.333 \\ \hline
	1218 &0.0645  &141.044 &2683 &0.0637 &1753.489 \\ \hline
	1368 &0.0575  &153.877 &3014 &0.0567 &2026.402 \\ \hline
	1518 &0.0518  &173.538 &3348 &0.0511 &2210.362   \\ \hline
	1668 &0.0472  &190.714 &3680 &0.0465 &2301.327  \\ \hline
	1818 &0.0434  &208.966 &4014 &0.0427 &2611.942  \\ \hline
	1968 &0.0401  &228.198 &4346 &0.0394 &2970.324  \\ \hline
	2118 &0.0373  &243.970 &4678 &0.0367 &3153.905 \\ \hline
	2268 &0.0349  &253.112 &5012 &0.0343 &3387.451  \\ \hline
	2426 &0.0323  &266.583 &5346 &0.0322 &3619.831 \\ \hline
	\end{tabular}
\end{table}
\end{remark}

\section{Numerical Experiments for Matrix Games with Inexactness}
\label{subsec:games}
We continue our experiments with computing a Nash equilibrium of a matrix game. For that
purpose one should solve the saddle point problem
\begin{equation}
\label{eq_sad_prob}
\min_{x \, \in \, \Delta_n} \max_{y \, \in \, \Delta_m} x^T A y,
\end{equation}
where $x = (x_1, \, x_2, \, \ldots, \, x_n) \, \in \, \mathbb{R}^n$, $y = (y_1, \, y_2, \, \ldots, \, y_n) \, \in \, \mathbb{R}^m$, $\Delta_n$
is a standard simplex in $\mathbb{R}^n$,
i.e. $\Delta_n = \{x \, \in \, \mathbb{R}^n \, | \, x \, \ge \, 0, \, \, \sum\limits_{i = 1}^n x_i = 1\}$,
$\Delta_m$ is a standard simplex in $\mathbb{R}^m$,
$A$ is the payoff matrix for the $y$ player.
In all experiments we use payoff distributions centered at zero. 
Consider the following operator
\begin{equation}
\label{eq_gu}
g(u) = \left ( 
\begin{array}{c}
\nabla_x (x^T A y) \\
- \nabla_y (x^T A y)
\end{array}
\right ) =
\left ( 
\begin{array}{c}
 A^T y \\
-  A x
\end{array}
\right ),  \, u = (x, y) \, \in \, Q \equiv \Delta_n \times \Delta_m.
\end{equation}
The operator $g(u)$ from \eqref{eq_gu} is monotone on $Q$, and with this operator the VI~\eqref{ad_meth_eq2} has the
same solution as the saddle point problem~\eqref{eq_sad_prob}. 
So, Mirror Prox methods could be used for solving it.

In all experiments with matrix games we use
the entropy prox-function $d(x) = \sum\limits_{i = 1}^n x_i \ln x_i$ in Bregman divergence~\eqref{Bregman_diver}. Entropy
prox for matrix games on simplex is the best option (see \cite{Nest_2005}).

\begin{figure}[htb]
\centering
\begin{subfigure}{.5\textwidth}
  \centering
  \includegraphics[width=1.1\linewidth]{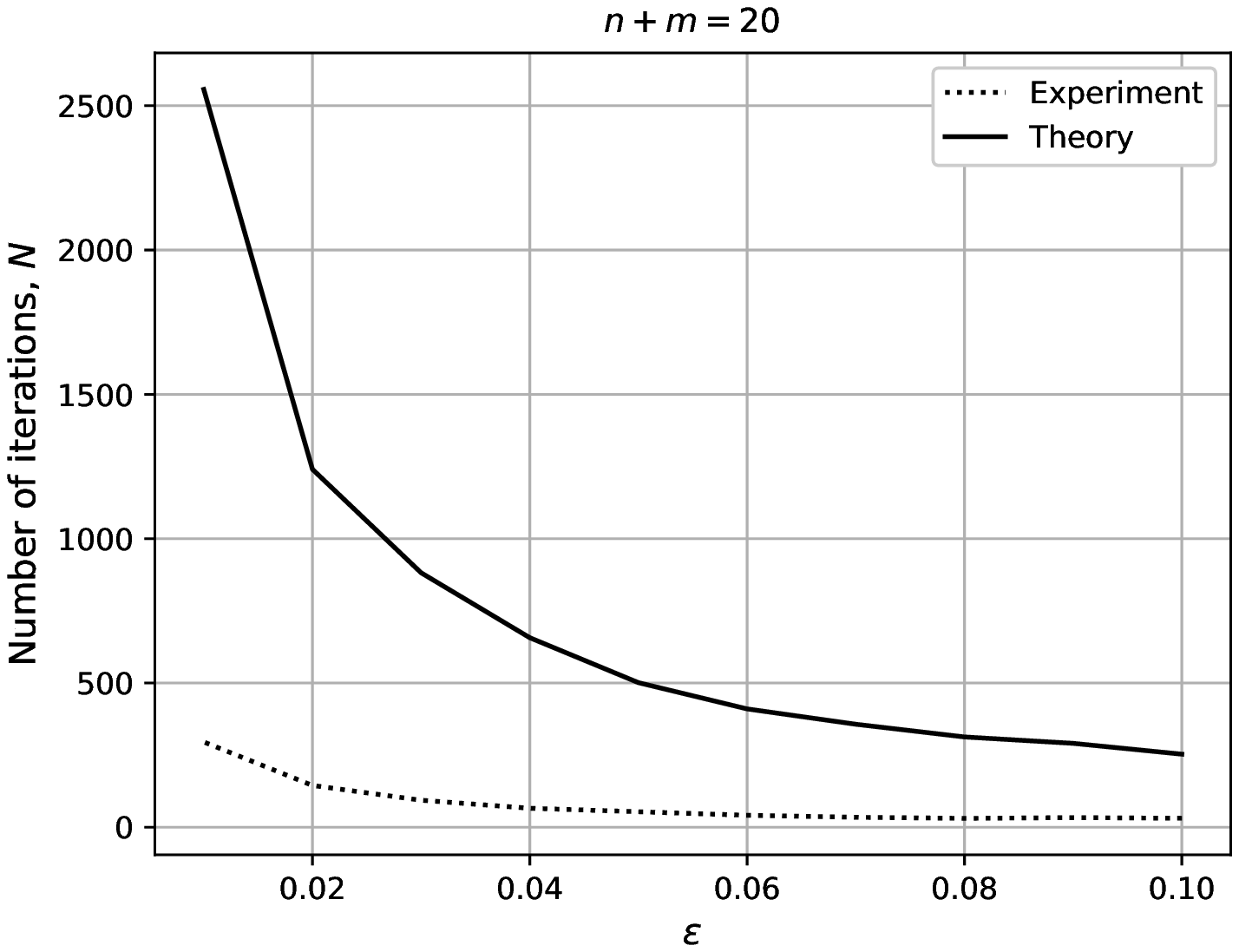}
  \caption{}
  \label{fig:sub1}
\end{subfigure}%
\begin{subfigure}{.5\textwidth}
  \centering
  \includegraphics[width=1.1\linewidth]{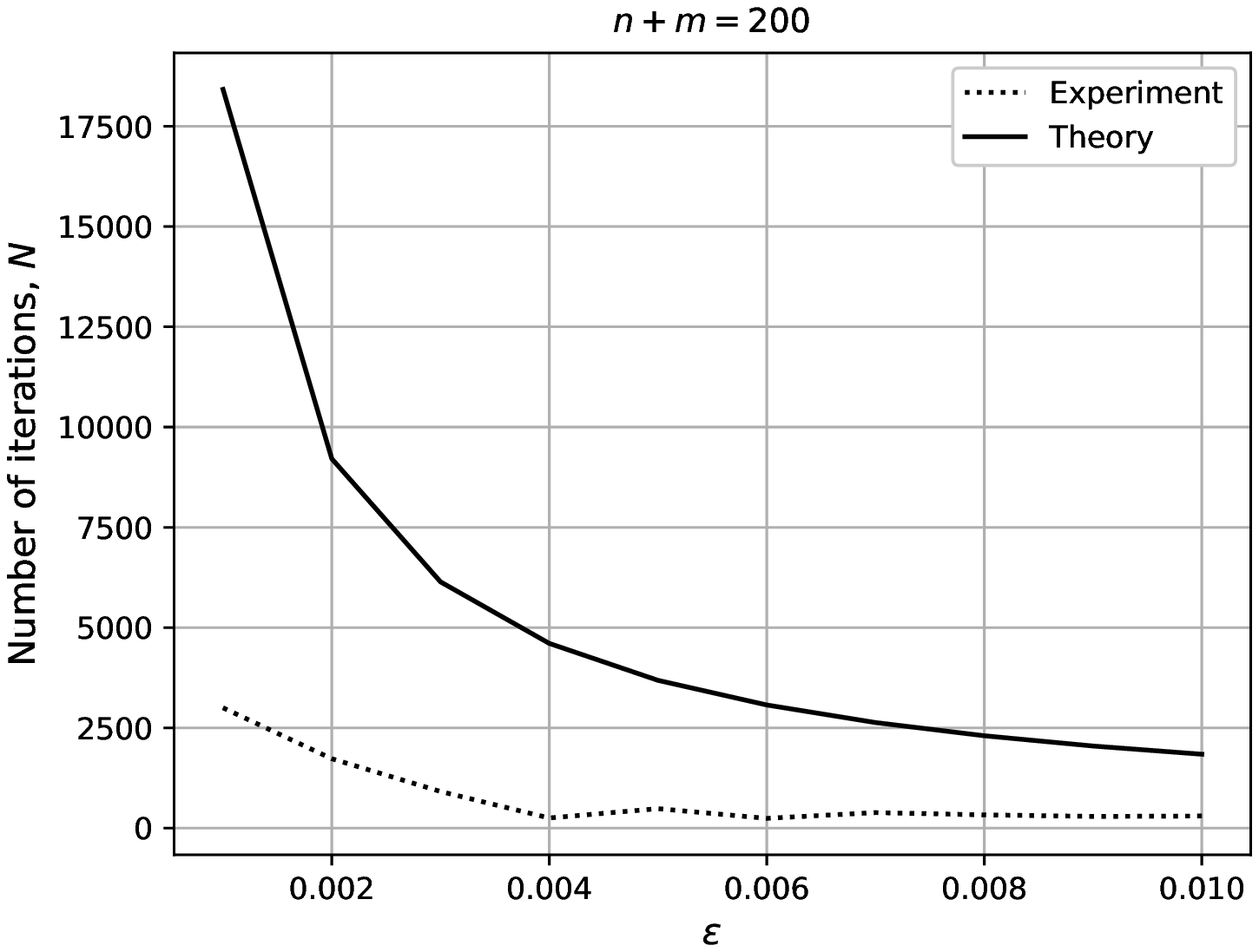}
  \caption{}
  \label{fig:sub2}
\end{subfigure}
\caption{The first kind of experiments with matrix games. Dependence between
experimental and theoretical number of iterations
for different~$\varepsilon$. (a) $10 \, \times \, 10$ matrix $A$ 
from \eqref{eq_sad_prob}, (b) $100 \, \times \, 100$ matrix $A$ from 
\eqref{eq_sad_prob}.}
\label{fig_exp10_m1}
\end{figure}

First of all, we calculate experimental numbers of iterations
for adaptive proximal
method for VI~\cite{UMP_MAIN} and
compare these numbers with theoretical estimation $N=\frac{C}{\varepsilon}$, for some $C>0$~\cite{Nemirovski_2004}.
For that kind of experiments 
we run simulations on two classes of random matrix games: $10 \times 10$ and $100 \times 100$ normally distributed payoff
matrices. For the first setting we create 50 games at random and
calculate average experimental number of iterations over all games.  
Fig.~\ref{fig_exp10_m1} shows the results for different $\varepsilon$.
The experimental results are better than 
theoretical estimation in all cases.

\begin{figure}
\centering
\begin{subfigure}{.5\textwidth}
  \centering
  \includegraphics[width=1.1\linewidth]{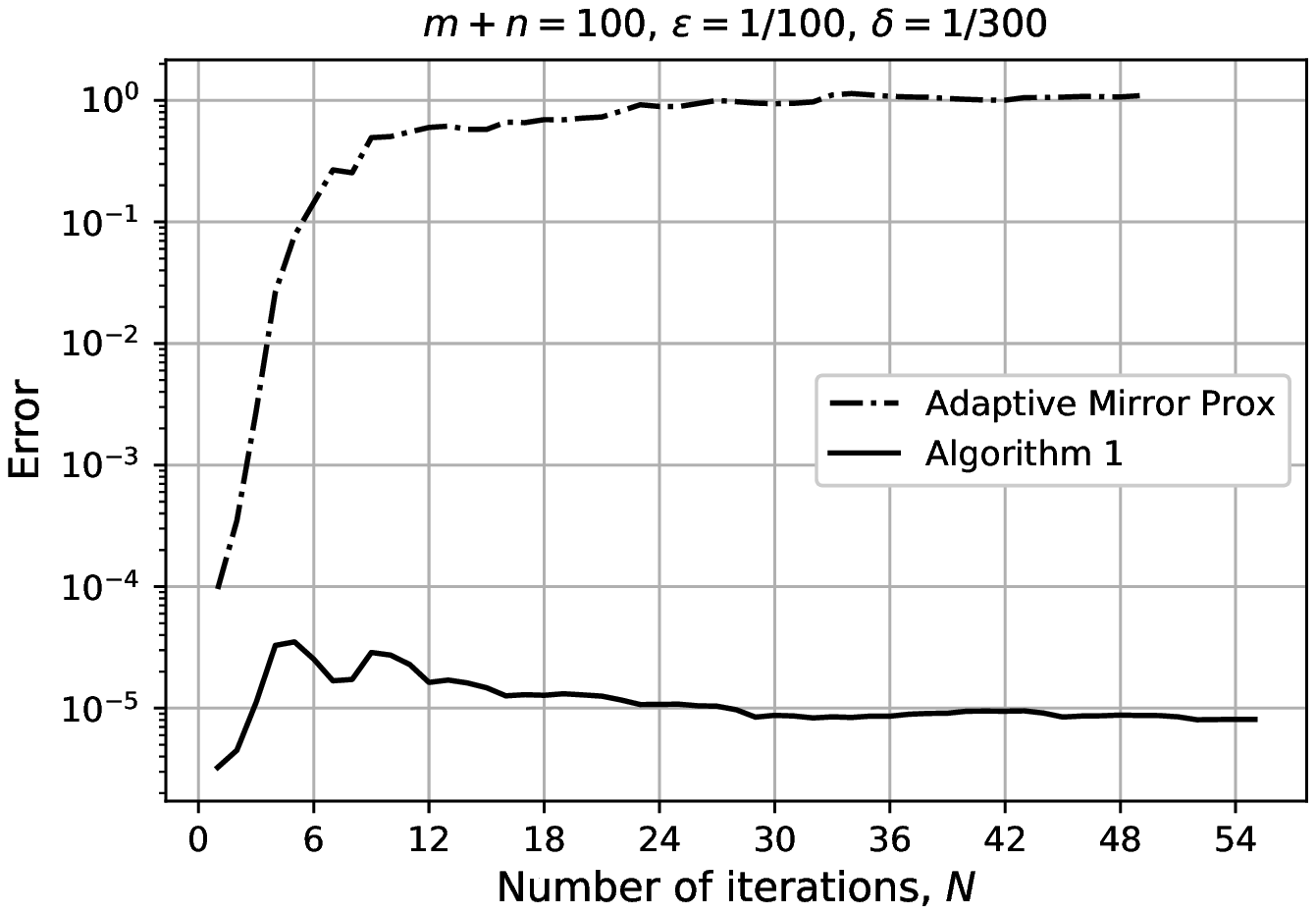}
  \caption{}
  \label{fig:sub3}
\end{subfigure}%
\begin{subfigure}{.5\textwidth}
  \centering
  \includegraphics[width=1.1\linewidth]{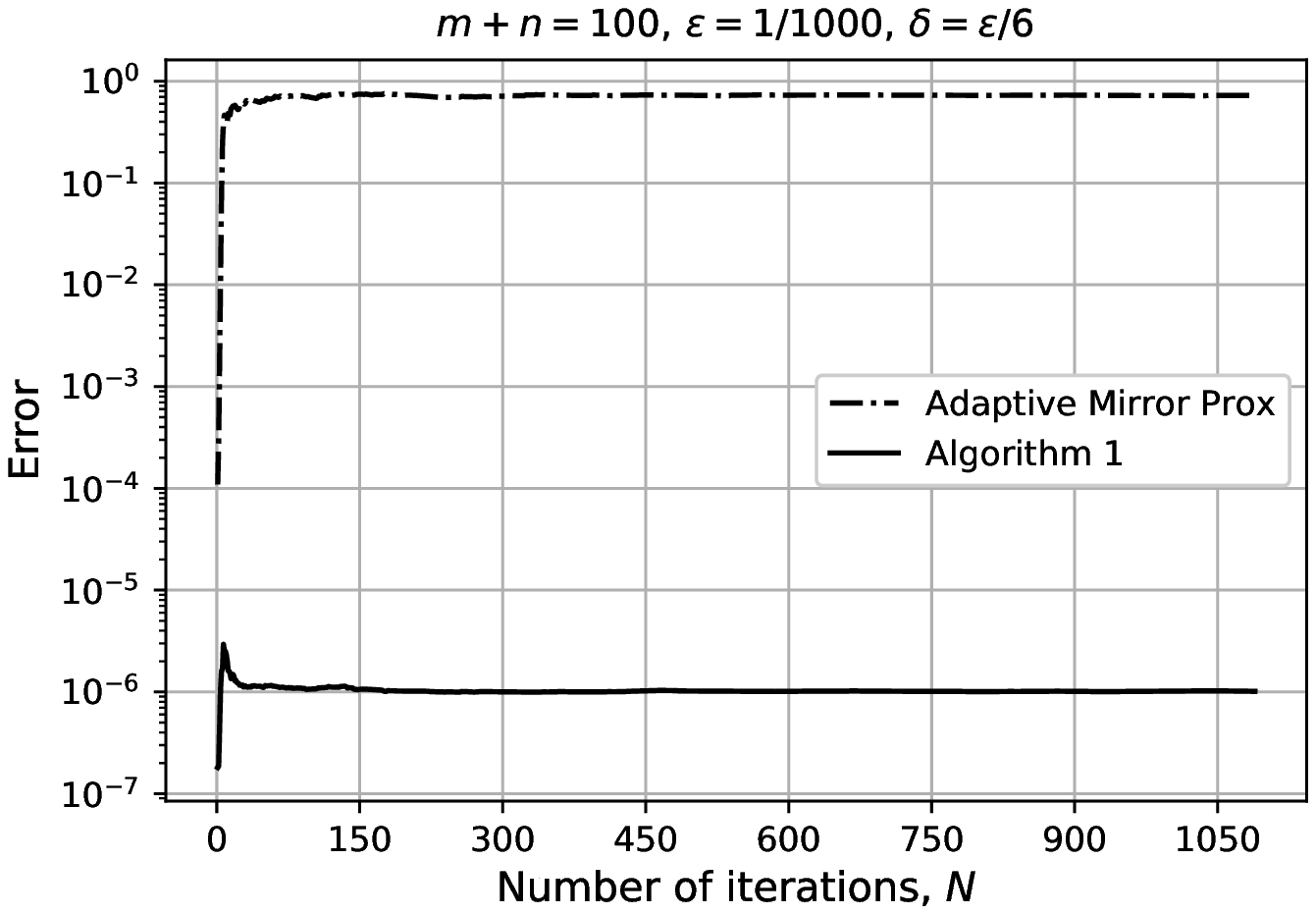}
  \caption{}
  \label{fig:sub4}
\end{subfigure}
\caption{The second kind of experiments with matrix games. The logarithmic scale on the Error-axis. (a) $\varepsilon = 1/100, \, \delta = 1/300$, (b) $\varepsilon = 1/1000, \, \delta = 1/6000$.}
\label{fig_2exp100-1}
\end{figure}

\begin{figure}
\centering
\includegraphics[width=0.8\textwidth]{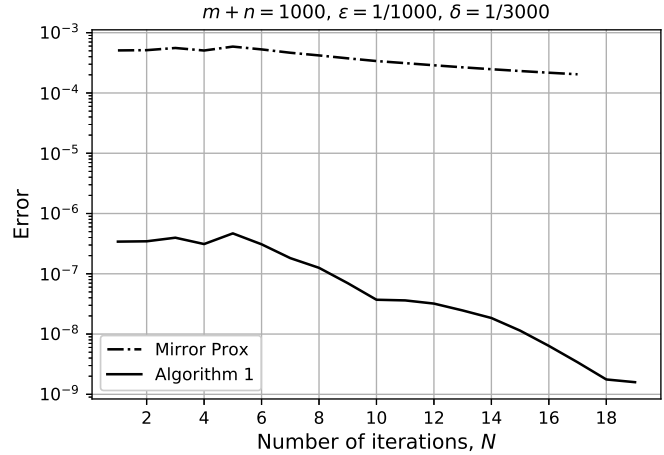}
\caption{The second kind of experiments with huge-scale matrix games. The logarithmic scale on the Error-axis. $1000 \, \times \, 1000$ matrix $A$ from 
\eqref{eq_sad_prob}.} \label{fig_2exp1000-1}
\end{figure}

In the second part of experiments with matrix games we compare the proposed Algorithm MPAI with  adaptive Mirror Prox method from~\cite{UMP_MAIN}. In this part of experiments we modified problem~\eqref{eq_sad_prob} by adding 
inexactness (a bounded by~$\delta$) random noise) to the operator~$g$ of VI~\eqref{ad_meth_eq2}. Fig.~\ref{fig_2exp100-1}-\ref{fig_2exp1000-1} show the results of these experiments.

For comparison we calculate the specific values that determine the degree of influence of the inexactness on the final estimate of the decisions' accuracy.
So, not the whole error estimations are compared, but only improved by our approach part of the
error estimations.

For Algorithm~\ref{algorithm:new_GMP}
this specific value is equal to 
(see~\eqref{ineq:estimateofGMP}):
\begin{equation}
\label{eq_sec5_er_alg1}
\frac{1}{S_N}\sum\limits_{k=0}^{N-1}\frac{\delta^{k+1}}{L^{k+1}}
\left \|y^{k+1}-x^{k+1} \right \|
\end{equation}
and for adaptive Mirror Prox \cite{UMP_MAIN} method we can estimate the analogous value in the following way:
    $$
    \frac{1}{S_N}\sum\limits_{k=0}^{N-1} \frac{\delta}{L^{k+1}} 
    \left \|y^{k+1}-x^{k+1}
    \right \|.
    $$
According to the scheme of the proofs of Theorems \ref{ad_meth_th1} and \ref{ad_meth_th2}, we can obtain the analogous value for non-adaptive Mirror
Prox with constant step $1/L$:
\begin{equation}
\label{eq_sec5_er_MP}
    \frac{1}{N}\sum\limits_{k=0}^{N-1} \delta \left \|y^{k+1} - x^{k+1} \right \|.
\end{equation}
It is worth mentioning that estimate~\eqref{eq_sec5_er_alg1} 
should be less
than \eqref{eq_sec5_er_MP}
because of 
adaptive reduction $\delta^{k+1} < \delta$.

We may see that the accumulated error for  proposed MPAI method is smaller than the error for adaptive Mirror Prox method from ~\cite{UMP_MAIN}.

\section{Conclusion}

The paper proposes an analogue of the Mirror Prox method for variational inequalities with adaptive tuning not only for the constant $L_{\nu}$, but also for the magnitude of the operator’s error. Moreover, such an error can set the degree of discontinuity of the operator. It is proved that the proposed method converges with the optimal estimate of complexity $O\left(\frac{1}{\varepsilon}\right)$ for the Lipschitz-continuous operator and the magnitude of the error is limited. It is important that the result applies to a certain class of variational inequalities with bounded (generally, discontinuous) operators ($\nu = 0$). The paper also presents the results of experiments that demonstrate the ability to work with the estimate of the complexity close to $O\left(\ln\left(\frac{1}{\varepsilon}\right)\right)$ even for a problem with bounded  operator ($\nu = 0$) and the experimental comparison between adaptive Mirror Prox and the proposed algorithm for a special application (matrix games with inexactness). We also note that all result of this paper is applicable for VI with relatively smooth operators (with the exception of Remark \ref{RemNonSmooth}). More precisely, the prox-function and Bregman divergence in \eqref{eqv_gen_smooth} may not be strongly convex (for convex optimization problems this situation was studied e.g. in \cite{relatively_Nesterov_2017,inexact_model_Fedor_2019}).

\end{document}